\newtheorem{thm}{Theorem}[section]
\newtheorem{lem}[thm]{Lemma}
\newtheorem{fac}[thm]{Fact}
\newtheorem{thm-con}[thm]{Theorem-Conjecture}
\numberwithin{equation}{section}
\theoremstyle{definition}
\newcommand{\f}{\Bbb F}
\begin{document}

\title{On the DLW Conjectures}

\author[Xiang-dong Hou]{Xiang-dong Hou}
\address{Department of Mathematics and Statistics,
University of South Florida, Tampa, FL 33620}
\email{xhou@usf.edu}
%\thanks{* Research partially supported by .}

\keywords{finite field, monomial graph, permutation polynomial}

\subjclass[2000]{}

\begin{abstract}
In 2007, Dmytrenko, Lazebnik and Williford posed two related conjectures about polynomials over finite fields. Conjecture~1 is a claim about the uniqueness of certain monomial graphs. Conjecture~2, which implies Conjecture~1, deals with certain permutation polynomials of finite fields. Two natural strengthenings of Conjecture~2, referred to as Conjectures~A and B in the present paper, were also insinuated. In a recent development, Conjecture~2 and hence Conjecture~1 have been confirmed. The present paper gives a proof of Conjecture~A.
\end{abstract}

\maketitle

%%%%%%%%%%%%%%%%%%%%%%%%%%%%%%%%%%%%%%%%%%%
%  section 1
%%%%%%%%%%%%%%%%%%%%%%%%%%%%%%%%%%%%%%%%%%%
\section{Introduction}\label{s1}

Let $\f_q$ denote the finite fields with $q$ elements. For $f,g\in\f_q[X,Y]$, $G_q(f,g)$ is a bipartite graph with vertex partitions $P=\f_q^3$ and $L=\f_q^3$, and edges defined as follows: a vertex $(p_1,p_2,p_3)\in P$ is adjacent to a vertex $[l_1,l_2,l_3]\in L$ if and only if
\begin{equation}\label{1.1}
p_2+l_2=f(p_1,l_1)\quad \text{and}\quad p_3+l_3=g(p_1,l_1).
\end{equation}
The graph $G_q(f,g)$ is called a {\em polynomial graph}, and when $f$ and $g$ are both monomials, it is called a {\em monomial graph}. Polynomial graphs were introduced by Lazebnik, Ustimenko and Woldar in \cite{Lazebnik-Ustimenko-Woldar-BAMS-1995} to provide examples of dense graphs of high girth. In particular, the monomial graph $G_q(XY,XY^2)$ has girth $8$, and its number of edges achieves the maximum asymptotic magnitude of the function $g_3(n)$ as $n\to \infty$, where $g_k(n)$ is the maximum number of edges in a graph of order $n$ and girth $\ge 2k+1$. (For surveys on the function $g_k(n)$, see \cite{Bondy-BSMS-2002, Furedi-Simonovits-BSMS-2013}.)

Let $q=p^e$, where $p$ is an odd prime and $e\ge 0$. It was proved in \cite{Dmytrenko-Lazebnik-Williford-FFA-2007} that every monomial graph of girth $\ge 8$ is isomorphic to $G_q(XY,X^kY^{2k})$ for some $1\le k\le q-1$, and the following conjecture was posed in \cite{Dmytrenko-Lazebnik-Williford-FFA-2007}:

\medskip

\noindent{\bf Conjecture~1.} (\cite[Conjecture~4]{Dmytrenko-Lazebnik-Williford-FFA-2007}) {\em 
Every monomial graph of girth $8$ is isomorphic to $G_q(XY,XY^2)$.
}
\medskip

To prove Conjecture~1, it suffices to show that if $1\le k\le q-1$ is such that $G_q(XY,X^kY^{2k})$ has girth $\ge 8$, then $k$ is a power of $p$. 

A polynomial $h\in\f_q[X]$ is called a {\em permutation polynomial} (PP) of $\f_q$ if the mapping $x\mapsto h(x)$ is a permutation of $\f_q$. For $1\le k\le q-1$, let
\begin{equation}\label{1.2}
A_k=X^k\bigl[(X+1)^k-X^k\bigr]\in\f_q[X]
\end{equation}
and 
\begin{equation}\label{1.3}
B_k=\bigl[(X+1)^{2k}-1\bigr]X^{q-1-k}-2X^{q-1}\in\f_q[X].
\end{equation}
It was proved in \cite{Dmytrenko-Lazebnik-Williford-FFA-2007} that if $1\le k\le q-1$ is such that $G_q(XY,X^kY^{2k})$ has girth $\ge 8$, then both $A_k$ and $B_k$ are PPs of $\f_q$. Consequently, a second conjecture was proposed:

\medskip
\noindent{\bf Conjecture~2.} (\cite[Conjecture~16]{Dmytrenko-Lazebnik-Williford-FFA-2007}) {\em If $1\le k\le q-1$ is such that both $A_k$ and $B_k$ are PPs of $\f_q$, then $k$ is a power of $p$.
}

\medskip

Note that if $k$ is a power of $p$, then $A_k$ and $B_k$ are clearly PPs of $\f_q$. Obviously, Conjecture~2 implies Conjecture~1. Although the polynomials $A_k$ and $B_k$ are both related to the graph $G_q(XY,X^kY^{2k})$, it is not clear how they are related to each other. Therefore, it is natural to consider the polynomials $A_k$ and $B_k$ separately, giving rise to the following two stronger versions of Conjecture~2; see \cite{Hou-Lappano-Lazebnik-FFA-2017,Kronenthal-FFA-2012,Lazebnik-Sun-Wang-LNSIM-2016}.

\medskip
\noindent{\bf Conjecture~A.} {\em 
Assume that $1\le k\le q-1$. Then $A_k$ is a PP of $\f_q$ if and only if $k$ is a power of $p$.
} 

\medskip
\noindent{\bf Conjecture~B.} {\em 
Assume that $1\le k\le q-1$. Then $B_k$ is a PP of $\f_q$ if and only if $k$ is a power of $p$.
} 

\medskip

We refer to all above conjectures as the {\em DLW conjectures} (after the authors of \cite{Dmytrenko-Lazebnik-Williford-FFA-2007}). A breakthrough on these conjectures came recently when Conjecture~2 and hence Conjecture~1 were proved in \cite{Hou-Lappano-Lazebnik-FFA-2017}. However, Conjectures~A and B remained unsolved. The purpose of the present paper is to give a proof of Conjecture~A.

We rely on several previous results on the polynomial $A_k$ from \cite{Hou-Lappano-Lazebnik-FFA-2017}. A summary of these results is given in Section~\ref{s2}. Section~\ref{s3} is devoted to the proof of Conjecture~A.

%%%%%%%%%%%%%%%%%%%%%%%%%%%%%%%%%%%%%%%%%%%
%  section 2
%%%%%%%%%%%%%%%%%%%%%%%%%%%%%%%%%%%%%%%%%%%
\section{Previous Results on $A_k$}\label{s2}

Recall that $q=p^e$, where $p$ is an odd prime and $e>0$, and $1\le k\le q-1$. Congruence of integers modulo $p$ is denoted by $\equiv_p$. For each integer $a>0$, let $a^*\in\{1,\dots,q-1\}$ be such that $a^*\equiv a\pmod{q-1}$; in addition, we define $0^*=0$. We will need the following known facts about the polynomial $A_k$ for the proof of Conjecture~A; the proofs of these facts can be found in \cite{Hou-Lappano-Lazebnik-FFA-2017}.

\begin{fac}\label{F2.1}
$A_k$ is a PP of $\f_q$ if and only if $\text{\rm gcd}(k,q-1)=1$ and
\begin{equation}\label{2.1}
\sum_{1\le i\le q-2}(-1)^i\binom si\binom{(ki)^*}{(2ks)^*}\equiv_p 0\quad\text{for all}\ 1\le s\le q-2.
\end{equation}
\end{fac}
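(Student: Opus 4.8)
The plan is to apply Hermite's criterion, which asserts that $A_k$ is a PP of $\f_q$ if and only if (i) $A_k$ has exactly one root in $\f_q$, and (ii) $\sum_{x\in\f_q}A_k(x)^s=0$ for every $1\le s\le q-2$. I would treat the two conditions separately, showing that (i) is equivalent to $\gcd(k,q-1)=1$ while, in the presence of (i), condition (ii) is equivalent to the family of congruences \eqref{2.1}. For the root count, observe that $x=0$ is always a root, while a nonzero $x$ is a root precisely when $(x+1)^k=x^k$, i.e.\ $u^k=1$ for $u=1+x^{-1}$. As $x$ runs over $\f_q^\times$ the element $u$ runs over $\f_q\setminus\{1\}$, so the nonzero roots are in bijection with the solutions of $u^k=1$ other than $u=1$, of which there are exactly $\gcd(k,q-1)-1$. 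Hence $A_k$ has a unique root if and only if $\gcd(k,q-1)=1$, giving the first half of the stated condition.

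To handle the power sums, I would expand $A_k(x)^s=\sum_{i=0}^{s}\binom{s}{i}(-1)^{s-i}x^{k(2s-i)}(x+1)^{ki}$ and sum termwise, using the elementary fact that $\sum_{x\in\f_q}x^m=-1$ when $(q-1)\mid m$ and $m\ge 1$, and $=0$ otherwise. Since $x^m=x^{m^*}$ and $(x+1)^m=(x+1)^{m^*}$ hold identically on $\f_q$ whenever $m\ge 1$ (both sides vanishing at the relevant point), each inner sum $\sum_{x}x^{k(2s-i)}(x+1)^{ki}$ with $i\ge 1$ can be reduced to exponents in $\{1,\dots,q-1\}$ and then evaluated by expanding $(x+1)^{(ki)^*}$; the surviving contribution is a single binomial coefficient, which after the symmetry $\binom{b}{j}=\binom{b}{b-j}$ and the congruence $(ki)^*+\bigl(k(2s-i)\bigr)^*\equiv 2ks\pmod{q-1}$ I expect to identify with $-\binom{(ki)^*}{(2ks)^*}$. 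Collecting the terms for $1\le i\le s$ (equivalently $1\le i\le q-2$, since $\binom{s}{i}=0$ for $i>s$, and since $\binom{0^*}{(2ks)^*}=0$) should yield $\sum_{x}A_k(x)^s=\pm\sum_{1\le i\le q-2}(-1)^i\binom{s}{i}\binom{(ki)^*}{(2ks)^*}$, so that vanishing of all these power sums becomes precisely \eqref{2.1}.

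The main obstacle is the boundary bookkeeping in this last step: the reduction to a single binomial coefficient fails exactly when an exponent $k(2s-i)+j$ lands on a multiple of $q-1$, most visibly in the $i=0$ term, which contributes $(-1)^s\sum_{x}x^{2ks}$ and is nonzero precisely when $(q-1)\mid 2ks$. I would verify that these exceptional contributions cancel against the corresponding boundary discrepancies arising in the $i\ge 1$ terms; already the case $q=3$, $k=1$ shows that this cancellation is nontrivial but exact, so that the clean identity above survives. Once the cancellation is confirmed in general, Hermite's criterion combines the two halves into the stated equivalence, completing the proof of the Fact.
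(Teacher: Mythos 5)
The paper itself does not prove Fact~\ref{F2.1}; it imports it from \cite{Hou-Lappano-Lazebnik-FFA-2017}, and the proof there is along exactly the lines you propose: Hermite's criterion, the root count $1+(\gcd(k,q-1)-1)$ forcing $\gcd(k,q-1)=1$, and the reduction of the power sums $\sum_{x}A_k(x)^s$ to the binomial sums of \eqref{2.1} via $\sum_{x}x^m=-1$ when $(q-1)\mid m$, $m\ge1$, and $=0$ otherwise. Your root count is correct, and so is your main-term computation.

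The only thing separating your sketch from a proof is the cancellation you explicitly defer, so let me confirm that it closes, and more easily than you fear. Writing $M=(k(2s-i))^*$ and $N=(ki)^*$ for $1\le i\le s$, you have $N\le q-2$ (since $\gcd(k,q-1)=1$ and $1\le i\le q-2$), so the inner sum is exactly $-\binom{N}{q-1-M}$ with no second wrap-around term (that would require $M=N=q-1$); and when $(q-1)\nmid 2ks$ the relation $M+N\equiv 2ks\pmod{q-1}$ gives $\binom{N}{q-1-M}=\binom{N}{(2ks)^*}$ in every case, including $M=q-1$. The exceptional situation $(q-1)\mid 2ks$ occurs for exactly one $s$ in range, namely $s=(q-1)/2$, and there both sides of the asserted equivalence hold vacuously: every term of \eqref{2.1} contains $\binom{(ki)^*}{q-1}$ with $(ki)^*\le q-2$, so the left side of \eqref{2.1} is identically $0$; on the other hand $(k(2s-i))^*=q-1-(ki)^*$, so each inner sum equals $-\binom{N}{N}=-1$ while the $i=0$ term contributes $(-1)^s\sum_x x^{2ks}=-(-1)^s$, whence $\sum_x A_k(x)^s=-(-1)^s-\bigl[(1-1)^s-(-1)^s\bigr]=0$ identically. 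So there is no delicate cancellation to chase, just this one boundary value of $s$ to check separately. With that paragraph supplied, your argument is complete and is essentially the proof given in the cited source.
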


\begin{fac}\label{F2.2}
Assume that $A_k$ is a PP of $\f_q$ and let $k'\in\{1,\dots,q-2\}$ be such that $k'k\equiv 1\pmod{q-1}$. Then all the base $p$ digits of $k'$ are $0$ or $1$.
\end{fac}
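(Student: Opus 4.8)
The plan is to deduce the digit restriction on $k'$ entirely from Fact~\ref{F2.1}, using the congruences \eqref{2.1} together with Lucas' theorem on binomial coefficients modulo $p$. First I would record that, since $A_k$ is a PP, Fact~\ref{F2.1} gives $\gcd(k,q-1)=1$, so that $k'$ is indeed well defined, and the full family of congruences \eqref{2.1}. The guiding principle is that each coefficient $\binom{(ki)^*}{(2ks)^*}$ is, by Lucas' theorem, nonzero modulo $p$ if and only if every base-$p$ digit of $(2ks)^*$ is dominated by the corresponding digit of $(ki)^*$; thus \eqref{2.1} is really a statement about base-$p$ digits, and the aim is to choose $s$ so as to isolate a single digit of $k'$.

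Second, I would specialize \eqref{2.1} to $s=p^m$ for $0\le m\le e-1$. Because $\binom{p^m}{i}\equiv_p 0$ for $0<i<p^m$ while $\binom{p^m}{p^m}=1$, the entire alternating sum collapses to the single term $i=p^m$, yielding
\begin{equation*}
\binom{(kp^m)^*}{(2kp^m)^*}\equiv_p 0\qquad(0\le m\le e-1).
\end{equation*}
Here I would exploit two structural facts: that multiplication by $p^m$ modulo $q-1$ merely rotates the $e$ base-$p$ digits cyclically (since $p^e\equiv 1\pmod{q-1}$), so that these congruences form a rotation-invariant family; and that $q-1$ is even, whence $(2ks)^*$ is always even. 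Translating the resulting digit-domination failures through the relation $kk'\equiv 1\pmod{q-1}$ is what converts information about the digits of $k$ into information about the digits of $k'$.

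The core of the argument would then be a proof by contradiction: assuming that some base-$p$ digit of $k'$ is at least $2$, I would construct a value of $s$ (built from the offending digit position and its cyclic shifts, in the spirit of the $s=p^m$ computation) for which Lucas' theorem forces all but one term of the sum in \eqref{2.1} to vanish modulo $p$ while the remaining term is a unit, contradicting \eqref{2.1}. The main obstacle, and the step demanding the most care, is the bookkeeping of carries under reduction modulo $q-1$: the reductions $(ki)^*$ and $(2ks)^*$ can wrap around, so the clean digit picture furnished by Lucas' theorem must be reconciled with the arithmetic of $2k$ and of the products $ki$ modulo $p^e-1$. Controlling this wraparound, together with the evenness constraint on $(2ks)^*$, so that precisely one term survives, is where the real work lies.
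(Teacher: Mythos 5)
The paper itself does not prove Fact~\ref{F2.2}; it is quoted from \cite{Hou-Lappano-Lazebnik-FFA-2017}, so the only question is whether your outline would constitute a proof. It would not, because the one step you actually carry out yields far too little information, and the step that would do the real work is left as a declared intention. Specializing \eqref{2.1} to $s=p^m$ does collapse the sum to the single term $i=p^m$ and gives $\binom{(kp^m)^*}{(2kp^m)^*}\equiv_p 0$; but since multiplication by $p^m$ modulo $q-1$ merely rotates base-$p$ digits (as you note), these $e$ congruences are all equivalent to the single condition $\binom{k}{(2k)^*}\equiv_p 0$, i.e.\ to the statement that some base-$p$ digit of $(2k)^*$ exceeds the corresponding digit of $k$. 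That is one scalar constraint on $k$ and cannot force all $e$ digits of $k'$ into $\{0,1\}$. Moreover, your proposed bridge --- ``translating the resulting digit-domination failures through the relation $kk'\equiv 1\pmod{q-1}$'' --- is not a mechanism: modular inversion does not act on base-$p$ digits in any transparent way, and you offer no argument for how a Lucas-type condition on the digits of $k$ becomes a condition on the digits of $k'$.

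The missing idea is the one visible in Lemma~\ref{L3.1}: reindex the sum \eqref{2.1} by replacing $i$ with $(k'i)^*$ and $s$ with $(k's)^*$, so that the binomial coefficient carrying the unknown becomes $\binom{(k's)^*}{(k'i)^*}$ and the digits of $k'$, rather than those of $k$, enter directly. Only then can a special choice of $s$ --- one for which $\binom{i}{(2s)^*}$ confines $i$ to a short family, in the style of Lemma~\ref{L3.2} with $2s=q-1-2(u+vp^t)$ --- turn \eqref{2.1} into a short explicit sum whose failure to vanish, when some digit of $k'$ is at least $2$, gives the contradiction. Your plan names the right tools (Lucas' theorem, a contradiction from a well-chosen $s$, care with carries under reduction modulo $q-1$), but the construction of that $s$ and the verification that exactly one term survives as a unit constitute the entire content of the result, and they are absent. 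As written, the proposal is a strategy statement rather than a proof.
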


\begin{fac}\label{F2.3}
Conjecture~A is true for $q=p^e$, where $e=1$ or the greatest prime divisor of $e$ is $\le p-1$. In particular, Conjecture~A is true for $q=p^2$ with $e\le 2$.
\end{fac}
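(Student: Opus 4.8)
The plan is to prove the nontrivial direction: if $A_k$ is a PP of $\f_q$ then $k$ is a power of $p$ (the converse is immediate, since $A_{p^j}(X)=X^{p^j}$). First I would record a symmetry that organizes the problem. For every $x\in\f_q$ one has $A_{kp}(x)=(x^k)^p\big((x+1)^k-x^k\big)^p=A_k(x)^p$, so, the Frobenius $u\mapsto u^p$ being a bijection of $\f_q$, $A_{kp}$ is a PP exactly when $A_k$ is. Hence the PP property depends only on the orbit $\{(kp^a)^*:a\ge 0\}$, and ``$k$ is a power of $p$'' means precisely that this orbit is that of $1$. Letting $k'\in\{1,\dots,q-2\}$ be the inverse of $k$ modulo $q-1$ as in Fact~\ref{F2.2}, the same remark shows it is equivalent to prove that $k'$ is a power of $p$. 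By Fact~\ref{F2.2} we may write $k'=\sum_{j\in S}p^j$ with $S\subseteq\mathbb Z/e\mathbb Z$ nonempty (all base-$p$ digits being $0$ or $1$), and the goal becomes $|S|=1$; multiplication by $p$ shifts $S$ cyclically, so by the symmetry above we may assume $0\in S$. When $e=1$ this is already done: $k'\in\{1,\dots,p-2\}$ has a single base-$p$ digit, so Fact~\ref{F2.2} forces $k'=1$. This is the base case.

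Next I would extract the arithmetic hidden in $\gcd(k',q-1)=1$ (part of Fact~\ref{F2.1}). Reducing modulo $p-1$ gives $k'\equiv|S|\pmod{p-1}$, whence $\gcd(|S|,p-1)=1$; for odd $p$ this already forces $|S|$ to be odd, ruling out $|S|=2$. If $S$ were invariant under the shift $j\mapsto j+d$ for some proper divisor $d$ of $e$, then $(p^e-1)/(p^d-1)$ would divide both $k'$ and $q-1$, contradicting coprimality; so $S$ is aperiodic. These observations dispose of the smallest supports but, as the example $p=5$, $e=4$, $S=\{0,1,2\}$ shows (here $\gcd(3,4)=1$ and $S$ is aperiodic, yet $31=1+p+p^2$ is not a power of $p$), they cannot settle the claim alone: the congruences \eqref{2.1} must be used.

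The core step is to feed the $0/1$-digit structure of $k'$ into \eqref{2.1}. Taking $s=k'$, and applying Lucas' theorem, the factor $\binom{s}{i}$ vanishes modulo $p$ unless $i$ is itself a subset-sum $i_T=\sum_{j\in T}p^j$ with $T\subseteq S$, in which case it equals $1$; since each $p^j$ is odd we have $(-1)^{i_T}=(-1)^{|T|}$, and because $kk'\equiv 1$ gives $(2ks)^*=2$, \eqref{2.1} collapses to
\[
\sum_{\emptyset\ne T\subseteq S}(-1)^{|T|}\binom{(k\,i_T)^*}{2}\equiv_p 0 .
\]
Letting $s$ run over the other subset-sums of $S$ (so that $(2ks)^*$ takes various values) produces a whole family of such alternating relations, and the task is to show these cannot all hold once $|S|\ge 2$.

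This last point is where the hypothesis on $e$ enters and where the real difficulty lies. Evaluating the binomial coefficients requires the base-$p$ digits of the reductions $(k\,i_T)^*$ and $(2ks)^*$ modulo $q-1$, that is, control of the carries produced by these reductions. When the largest prime divisor $\ell$ of $e$ satisfies $\ell\le p-1$, I expect these carries to stay bounded — keeping the relevant digit-sums below $p$ and coefficients such as $\binom{\ell}{\,\cdot\,}$ nonzero modulo $p$ — so that an induction on $e$ peeling off one prime factor $\ell\le p-1$ at a time reduces everything to the base case. The main obstacle is precisely this carry analysis together with the non-vanishing of the alternating sums; without the restriction on the prime divisors of $e$ the carries are no longer controlled, which is exactly the gap that the full argument in Section~\ref{s3} must close.
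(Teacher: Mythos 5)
Your proposal has a genuine gap at its central step. What you actually establish is the trivial direction, the case $e=1$ (which does follow from Fact~\ref{F2.2}: for $e=1$ the single base-$p$ digit of $k'$ must be $1$), and some necessary conditions on the support $S$ of $k'$ ($|S|$ is odd because $k'\equiv|S|\pmod{p-1}$ and $\gcd(k',p-1)=1$; $S$ is aperiodic because otherwise $(p^e-1)/(p^d-1)$ would divide $\gcd(k',q-1)$). These observations are correct but, as your own example $p=5$, $e=4$, $k'=1+p+p^2$ shows, they fall far short of the statement. The entire content of Fact~\ref{F2.3} --- that when every prime divisor of $e$ is at most $p-1$ the congruences \eqref{2.1} force $|S|=1$ --- is exactly the part you leave as an expectation. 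You do derive a correct specialization of \eqref{2.1} at $s=k'$, namely $\sum_{\emptyset\ne T\subseteq S}(-1)^{|T|}\binom{(k\,i_T)^*}{2}\equiv_p 0$ with $i_T=\sum_{j\in T}p^j$, but you extract nothing from it: the ``family of relations'' for other values of $s$ is not written down, the carry analysis for the reductions $(k\,i_T)^*$ and $(2ks)^*$ modulo $q-1$ is not performed, no induction step on $e$ is formulated, and the hypothesis that the greatest prime divisor of $e$ is $\le p-1$ is never used in any identifiable way (no specific binomial coefficient is shown to be nonzero modulo $p$ because of it). ``I expect these carries to stay bounded'' is a research plan, not an argument, and it is precisely at this point that the proof must happen.

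For orientation: the paper itself does not prove Fact~\ref{F2.3}; it quotes it, together with Facts~\ref{F2.1} and \ref{F2.2}, from \cite{Hou-Lappano-Lazebnik-FFA-2017}, and the argument of Section~\ref{s3} treats $e\ge 3$ directly, invoking Fact~\ref{F2.3} only to dispose of $e\le 2$. So a blind proof of this statement has to reproduce the relevant argument of \cite{Hou-Lappano-Lazebnik-FFA-2017}; your proposal supplies reasonable elementary preliminaries for such an argument (Frobenius invariance, the $0/1$-digit structure, the Lucas-type collapse of \eqref{2.1}) but not its core, and therefore does not establish the statement.
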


%%%%%%%%%%%%%%%%%%%%%%%%%%%%%%%%%%%%%%%%%%%
%  section 3
%%%%%%%%%%%%%%%%%%%%%%%%%%%%%%%%%%%%%%%%%%%
\section{Proof of Conjecture~A}\label{s3}

We first restate equation~\eqref{2.1} in terms of $k'$ defined in Fact~\ref{F2.2}.

\begin{lem}\label{L3.1}
$A_k$ is a PP of $\f_q$ if and only if $\text{\rm gcd}(k,q-1)=1$ and
\begin{equation}\label{3.1}
\sum_{2\le i\le q-2}(-1)^i\binom{(k's)^*}{(k'i)^*}\binom i{2s}\equiv_p 0,\quad 1\le s\le(q-1)/2,
\end{equation}
\begin{equation}\label{3.2}
\sum_{2\le i\le q-2}(-1)^i\binom{(k'(s+(q-1)/2))^*}{(k'i)^*}\binom i{2s}\equiv_p 0,\quad 1\le s<(q-1)/2.
\end{equation}
\end{lem}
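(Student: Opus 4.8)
The plan is to prove the lemma by showing that, under the standing hypothesis $\gcd(k,q-1)=1$ (which guarantees that $k'$ is defined and that multiplication by $k$ and by $k'$ permutes the residues modulo $q-1$), the single family of congruences \eqref{2.1} is equivalent to the two families \eqref{3.1}--\eqref{3.2}; together with Fact~\ref{F2.1} this gives the stated equivalence. The entire argument is a change of the summation and parametrization variables in \eqref{2.1}, arranged so that the ``inner'' binomial $\binom{(ki)^*}{(2ks)^*}$ is straightened out into $\binom{j}{\,\cdot\,}$ while the ``outer'' binomial $\binom{s}{i}$ acquires the crossed shape $\binom{(k's)^*}{(k'i)^*}$ appearing in \eqref{3.1}--\eqref{3.2}. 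Note that this step should require only $\gcd(k,q-1)=1$, not the digit information of Fact~\ref{F2.2}.

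Concretely, I would first substitute $i=(k'j)^*$ in \eqref{2.1}. Since $\gcd(k,q-1)=1$, the map $j\mapsto(k'j)^*$ is a bijection of $\{1,\dots,q-2\}$, and $kk'\equiv1\pmod{q-1}$ gives $(ki)^*=(kk'j)^*=j$; hence $\binom{(ki)^*}{(2ks)^*}=\binom{j}{(2ks)^*}$, while $\binom{s}{i}$ becomes $\binom{s}{(k'j)^*}$. I would then reparametrize the outer index by $s=(k'\sigma)^*$, a second bijection of $\{1,\dots,q-2\}$, which turns $\binom{s}{(k'j)^*}$ into $\binom{(k'\sigma)^*}{(k'j)^*}$ and, using $k(k'\sigma)^*\equiv\sigma\pmod{q-1}$, replaces $(2ks)^*$ by $(2\sigma)^*$. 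At this stage \eqref{2.1} reads $\sum_{j}(-1)^{(k'j)^*}\binom{(k'\sigma)^*}{(k'j)^*}\binom{j}{(2\sigma)^*}\equiv_p 0$ for all $\sigma\in\{1,\dots,q-2\}$.

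Two short reductions then complete the matching. First, the sign: because $q-1$ is even, $(-1)^{(k'j)^*}=(-1)^{k'j}$, and since $kk'\equiv1\pmod{q-1}$ forces $k'$ to be odd, this equals $(-1)^{j}$, which is exactly the sign $(-1)^i$ in \eqref{3.1}--\eqref{3.2}. Second, the modular halving of $(2\sigma)^*$: for $1\le\sigma\le(q-1)/2$ one has $(2\sigma)^*=2\sigma$, reproducing \eqref{3.1} with $s=\sigma$; for $(q-1)/2<\sigma\le q-2$ one has $(2\sigma)^*=2\bigl(\sigma-(q-1)/2\bigr)$, so setting $s=\sigma-(q-1)/2$ reproduces \eqref{3.2}. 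The $j=1$ term drops since $(2\sigma)^*\ge 2$ makes $\binom{1}{(2\sigma)^*}=0$, so each sum may start at $j=2$ as written, and a count confirms that the $(q-1)/2$ congruences of \eqref{3.1} together with the $(q-3)/2$ congruences of \eqref{3.2} account for exactly the $q-2$ values of $\sigma$.

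The manipulations are all routine once the two bijections are identified, and I expect genuine care to be needed only at the parity step and the case split. The observation that $k'$ is odd is what allows the transformed sign $(-1)^{(k'j)^*}$ to collapse to $(-1)^i$ --- without it the two families would not align --- and the threshold $\sigma=(q-1)/2$ in the halving of $(2\sigma)^*$ is precisely what splits the single family \eqref{2.1} into the pair \eqref{3.1}--\eqref{3.2}. The main obstacle is therefore bookkeeping rather than depth: verifying that each substitution is a genuine bijection on the stated index set, and that no boundary term is lost or double counted, in particular at $\sigma=(q-1)/2$, where both the transformed condition and the $s=(q-1)/2$ instance of \eqref{3.1} are vacuous since $\binom{i}{q-1}=0$ for $i\le q-2$.
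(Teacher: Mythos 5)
Your proposal is correct and follows essentially the same route as the paper: both reduce to Fact~\ref{F2.1} and then substitute $s\mapsto(k's)^*$, $i\mapsto(k'i)^*$ in \eqref{2.1}, splitting the resulting family at $s=(q-1)/2$ according to whether $(2s)^*$ equals $2s$ or $2s-(q-1)$. You supply some details the paper leaves implicit (that $k'$ is odd so $(-1)^{(k'i)^*}=(-1)^i$, the vanishing of the $i=1$ term, and the index count), but the argument is the same.
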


\begin{proof} By Fact~\ref{F2.1}, we only have to show that \eqref{2.1} is equivalent the combination of \eqref{3.1} and \eqref{3.2}. Replacing $s$ by $(k's)^*$ and $i$ by $(k'i)^*$ in \eqref{2.1} gives
\begin{equation}\label{3.3}
\sum_{1\le i\le q-2}(-1)^i\binom{(k's)^*}{(k'i)^*}\binom i{(2s)^*}\equiv_p 0,\quad 1\le s\le q-2.
\end{equation}
Note that \eqref{3.1} is \eqref{3.3} with $1\le s\le (q-1)/2$ and that  \eqref{3.2} is \eqref{3.3} with $(q-1)/2<s<q-1$.
\end{proof}

For each integer $l\ge 0$, write $l^*=\sum_{i=0}^{e-1}l_ip^i$, $0\le l_i\le p-1$, and define
\[
d(l)=(l_0,\dots,l_{e-1})
\]
and 
\[
\text{supp}\,(l)=\{0\le i\le e-1:l_i>0\}.
\]
For $\alpha=(\alpha_0,\dots,\alpha_{e-1})$, $\beta=(\beta_0,\dots,\beta_{e-1})\in\Bbb Z^e$, the congruence $\alpha\equiv\beta\pmod{p-1}$ means that $\alpha_i\equiv\beta_i\pmod{p-1}$ for all $0\le i\le e-1$.

\begin{lem}\label{L3.2}
Assume that $e\ge 3$. Let $0<t\le e-1$, $0\le u,v\le(p-1)/2$, $2s=q-1-2(u+vp^t)$, and $l=\sum_{i=0}^{e-1}l_ip^i$, where $l_i\in\{0,1\}$, and $(l_0,\dots,l_{e-1})\ne(1,\dots,1)$. Then 
\begin{equation}\label{3.4} 
\begin{split}
&\sum_{2\le i\le q-2}(-1)^i\binom{(l(s+(q-1)/2))^*}{(li)^*}\binom i{2s}\cr
\equiv_p\,&\sum_{\substack{0\le a\le 2u\cr 0\le b\le 2v}}(-1)^{(a+b+u+v)(x+y)}{\binom au}^x{\binom bv}^x{\binom{a+b}{u+v}}^y\binom{2u}a\binom{2v}b,
\end{split}
\end{equation}
where 
\begin{align*}
y=\,&|\text{\rm supp}\,(l)\cap\text{\rm supp}\,(p^tl)|=\bigl|\{0\le i\le e-1:l_i=l_{i-t}=1\}\bigr|,\cr
x=\,&|\text{\rm supp}\,(l)\setminus\text{\rm supp}\,(p^tl)|=\sum_{i=0}^{e-1}l_i-y,
\end{align*}
and the subscript of $l_{i-t}$ is taken modulo $e$.
\end{lem}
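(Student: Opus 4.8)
The plan is to evaluate the left-hand sum by tracking base-$p$ digits through the Lucas congruence for binomial coefficients modulo $p$. The key combinatorial input is Lucas' theorem: for nonnegative integers with base-$p$ expansions, $\binom{M}{N}\equiv_p\prod_i\binom{m_i}{n_i}$, so the entire sum in \eqref{3.4} will factor digit-by-digit once I understand how the various starred exponents distribute across the $e$ positions. First I would compute the base-$p$ digit vector of the quantity $s+(q-1)/2$, using the hypotheses $2s=q-1-2(u+vp^t)$ and $0\le u,v\le(p-1)/2$; because $u$ and $v$ are at most $(p-1)/2$, subtracting $2u+2vp^t$ from $q-1=\sum_i(p-1)p^i$ should produce no borrows, giving a clean digit vector supported essentially at positions $0$ and $t$. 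I would similarly pin down $d(s)$, hence $d(2s)$, so that the outer binomial $\binom{i}{2s}$ factors into local contributions at each digit.

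Next I would handle the factor $\binom{(l(s+(q-1)/2))^*}{(li)^*}$. Since the digits of $l$ lie in $\{0,1\}$, multiplication by $l$ and reduction mod $q-1$ acts as a cyclic selection/shift on digit positions; the sets $\operatorname{supp}(l)$ and $\operatorname{supp}(p^tl)$ enter precisely because the nonzero digits of $s+(q-1)/2$ sit at positions $0$ and $t$, and multiplying by $l$ spreads these into the positions indexed by $\operatorname{supp}(l)$ and its $t$-shift. The crucial bookkeeping is that positions where both $l_i=1$ and $l_{i-t}=1$ (counted by $y$) receive a combined contribution, while positions in the symmetric difference (counted by $x$) receive a single contribution; this is exactly the origin of the exponents $x$ and $y$ in the claimed product. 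I would reindex the summation variable $i$ by its relevant base-$p$ digits, say writing the active digits as $a$ at the position-class governed by $u$ and $b$ at the class governed by $v$, so that $(-1)^i$ and the Lucas factors reorganize into the stated double sum over $0\le a\le 2u$, $0\le b\le 2v$.

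The main obstacle will be the careful verification that the digit arithmetic is borrow-free and that the cyclic multiplication-by-$l$ map sorts the digit contributions into exactly the three regimes ($\operatorname{supp}(l)\setminus\operatorname{supp}(p^tl)$ contributing one $u$-type factor, the symmetric counterpart contributing one $v$-type factor, and the intersection contributing the coupled $\binom{a+b}{u+v}$ factor with multiplicity $y$). Getting the sign $(-1)^{(a+b+u+v)(x+y)}$ correct requires tracking how $(-1)^i$ collapses after summing the frozen digits and how the parity of the total digit sum interacts with the repetition counts $x$ and $y$; this is where an off-by-one in a shift index or a miscounted borrow would corrupt the result. I expect the hypothesis $(l_0,\dots,l_{e-1})\ne(1,\dots,1)$ to be used to guarantee that at least one digit of $l$ vanishes, preventing a degenerate wraparound that would otherwise make the factorization ill-defined; I would isolate and justify this nondegeneracy before assembling the final product. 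Once the three regimes are identified and the Lucas factors multiplied out, the right-hand side should emerge directly, so the bulk of the work is disciplined digit accounting rather than any deeper structural argument.
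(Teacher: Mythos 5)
Your outline follows the same general strategy as the paper (Lucas' theorem plus base-$p$ digit bookkeeping, with the three regimes $\mathrm{supp}(l)\setminus\mathrm{supp}(p^tl)$, its shifted counterpart, and the intersection producing the exponents $x,x,y$), and that part of the plan is sound: $d(2s)$ and $d\bigl(l(s+(q-1)/2)\bigr)$ are indeed borrow-free, since $2u,2v\le p-1$ and $u+v\le p-1$. But there is a genuine gap: the digit arithmetic for $(li)^*$ is \emph{not} borrow-free in general. Since $li\equiv -(a+bp^t)l\pmod{q-1}$, the intersection positions of $\mathrm{supp}(l)$ and $\mathrm{supp}(p^tl)$ carry the digit $a+b$, and $a+b$ can be as large as $2u+2v\le 2(p-1)$, so when $a+b>p-1$ carries propagate and the clean digit vector you intend to feed into Lucas' theorem simply does not describe $(li)^*$. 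This is not a marginal case: it occurs exactly when $y>0$ and $u+v>(p-1)/2$, i.e.\ in the regime $u=v=(p-1)/2$ that the paper uses to prove Conjecture~A, so a proof that silently assumes borrow-free arithmetic proves a statement too weak for the application.

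The paper's proof spends most of its effort here: when $a+b>p-1$ and $y>0$ it shows that \emph{both} sides of the key digit identity vanish mod $p$ --- the right side because $\binom{a+b}{u+v}\equiv_p\binom{a+b-p}{u+v}=0$ (as $a+b-p<u+v$), and the left side by an explicit analysis of how the carry travels through the digit string of $li$, producing in each case a factor $\binom{p-1-(u+v)}{2p-1-(a+b)}$ or $\binom{p-1-u}{p-1}$ that is $0$ mod $p$. It is precisely in this carry analysis (Case~2) that the hypothesis $(l_0,\dots,l_{e-1})\ne(1,\dots,1)$ is used, to guarantee $x>0$; your suggested role for that hypothesis (``preventing a degenerate wraparound that would make the factorization ill-defined'') is not what is needed. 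To complete your argument you would have to split off the case $a+b>p-1$, prove the two separate vanishing statements above, and only apply the digit-by-digit Lucas factorization when $a+b\le p-1$ (or when $y=0$); without that, the proposal does not establish \eqref{3.4}.
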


\begin{proof}
$1^\circ$ We have
\begin{equation}\label{3.5}
d(2s)=(\,\underset{0}{p-1-2u},\, p-1,\dots,p-1,\,\underset{t}{p-1-2v}, p-1,\dots,p-1\,).
\end{equation}
Hence, if $2\le i\le q-2$ is such that $\binom i{2s}\not\equiv_p 0$, we must have 
\begin{equation}\label{3.6}
d(i)=(\,\underset{0}{p-1-a},\, p-1,\dots,p-1,\,\underset{t}{p-1-b}, p-1,\dots,p-1\,),
\end{equation}
where $0\le a\le 2u$ and $0\le b\le 2v$; in this case,
\begin{equation}\label{3.7}
\begin{split}
\binom i{2s}\,&\equiv_p\binom{p-1-a}{p-1-2u}\binom{p-1-b}{p-1-2v}\equiv_p\binom{-(a+1)}{2u-a}\binom{-(b+1)}{2v-b}\cr
&=(-1)^{a+b}\binom{2u}a\binom{2v}b.
\end{split}
\end{equation}

\medskip

$2^\circ$ We claim that when \eqref{3.6} is satisfied, 
\begin{equation}\label{3.8}
\binom{(l(s+(q-1)/2))^*}{(li)^*}\equiv_p(-1)^{(a+b+u+v)(x+y)}{\binom au}^x{\binom bv}^x{\binom{a+b}{u+v}}^y.
\end{equation}

For the sake of notational convenience, we write
\begin{align}
\label{3.8.0}
d(l)=\,&(\,1, \dots, 1,\, 1, \dots, 1,\, 0, \dots, 0,\, 0, \dots, 0\,),\\
\label{3.9}
d(p^{t}l)=\,&(\,\underbrace{0, \dots, 0}_x,\, \underbrace{1, \dots, 1}_y,\, \underbrace{1, \dots, 1}_x,\, 0, \dots, 0\,);
\end{align}
In doing so, we no longer maintain the alignment of the components in \eqref{3.8.0} and \eqref{3.9} with the powers of $p$. Since $l(s+(q-1)/2)\equiv -(u+vp^{t})l\pmod{q-1}$, we have 
\begin{equation}\label{3.10}
\begin{split}
&d(l(s+(q-1)/2))=\cr
&(\,\underbrace{p-1-u,\dots,p-1-u}_x,\, \underbrace{p-1-(u+v),\dots,p-1-(u+v)}_y,\cr
&\,\,\underbrace{p-1-v,\dots,p-1-v}_x,\, p-1,\dots, p-1\,).
\end{split}
\end{equation}

We first assume that $a+b\le p-1$. Since $li\equiv-(a+bp^{t})\pmod{q-1}$, we have 
\begin{equation}\label{3.11}
\begin{split}
&d(li)=\cr
&(\,\underbrace{p-1-a,\dots,p-1-a}_x,\, \underbrace{p-1-(a+b),\dots,p-1-(a+b)}_y,\cr
&\,\,\underbrace{p-1-b,\dots,p-1-b}_x,\, p-1,\dots, p-1\,).
\end{split}
\end{equation}
Therefore
\begin{equation}\label{3.12}
\binom{(l(s+(q-1)/2))^*}{(li)^*}\equiv_p{\binom{p-1-u}{p-1-a}}^x{\binom{p-1-v}{p-1-b}}^x{\binom{p-1-(u+v)}{p-1-(a+b)}}^y.
\end{equation} 
In the above,
\begin{align}
\label{3.13}
\binom{p-1-u}{p-1-a}\,&\equiv_p (-1)^{a+u}\binom au,\\
\label{3.14}
\binom{p-1-v}{p-1-b}\,&\equiv_p (-1)^{b+v}\binom bv,\\
\label{3.15}
\binom{p-1-(u+v)}{p-1-(a+b)}\,&\equiv_p (-1)^{a+b+u+v}\binom {a+b}{u+v}.
\end{align}
Hence
\begin{align*}
\binom{(l(s+(q-1)/2))^*}{(li)^*}&\equiv_p(-1)^{(a+u)x}{\binom au}^x(-1)^{(b+v)x}{\binom bv}^x (-1)^{(a+b+u+v)y}{\binom {a+b}{u+v}}^{\!y}\cr
&=(-1)^{(a+b+u+v)(x+y)}{\binom au}^x{\binom bv}^x{\binom{a+b}{u+v}}^y.
\end{align*}

If $a+b>p-1$ but $y=0$, the above computation also gives \eqref{3.8}.

\medskip

Now assume that $a+b>p-1$ and $y>0$. Then 
\[
\binom{a+b}{u+v}\equiv_p\binom{a+b-p}{u+v}=0
\]
since $0\le a+b-p\le 2(u+v)-p<u+v$. It remains to show that the left side of \eqref{3.8} is also $\equiv_p 0$. We have
\begin{equation}\label{3.16}
\begin{split}
&d(li)\equiv\cr
&(\,\underbrace{p-1-a,\dots,p-1-a}_x,\, \underbrace{p-1-(a+b),\dots,p-1-(a+b)}_y,\cr
&\,\,\underbrace{p-1-b,\dots,p-1-b}_x,\, p-1,\dots, p-1\,) \pmod{p-1}.
\end{split}
\end{equation}
We remind the reader that for notational convenience, the components of the right side of \eqref{3.16} have not been aligned with the powers of $p$. Now, however, it necessary to align these components with the powers of $p$ since carries in base $p$ will be considered. 

\medskip

{\bf Case 1.} Assume that at least one component of the right side of \eqref{3.16} is $>0$. Since $y>0$, we may write
\[
d(li)\equiv (\alpha_1,\dots,\alpha_m)\pmod{p-1},
\]
where each $\alpha_j$ ($1\le j\le m$) is a block of the form
\[
\alpha_j=(p-1-(a+b),\,\dagger,\dots,\dagger,\,\epsilon_j,\,*,\dots,*),
\]
where each $\dagger$ is either $p-1-(a+b)$ or $0$, each $*$ belongs $\{0,\dots,p-1\}$, and $0<\epsilon_j\le p-1$. It follows that
\begin{equation}\label{3.17}
d(li)=(\alpha_1',\dots,\alpha_m'),
\end{equation}
where
\[
\alpha_j'=(2p-1-(a+b),\,*,\dots,*,\,\epsilon_j-1,\, *,\dots,*).
\]
Align the components of \eqref{3.10} with those of \eqref{3.17}. This gives
\[
d(l(s+(q-1)/2))=(p-1-(u+v),*,\dots,*).
\]
Therefore
\[
\binom{(l(s+(q-1)/2))^*}{(li)^*}\equiv_p\binom{p-1-(u+v)}{2p-1-(a+b)}\cdots=0
\]
since $2p-1-(a+b)\ge 2p-1-2(u+v)>p-1-(u+v)$.

\medskip

{\bf Case 2.} Assume that every component of the right side of \eqref{3.16} is either $p-1-(a+b)$ or $0$. Since $(l_0,\dots,l_{e-1})\ne(1,\dots,1)$, we have $y<e$ and hence $x>0$. It follows from \eqref{3.16} that $a=b=p-1$. Therefore we may write
\[
d(li)\equiv(0,\dagger,\dots,\dagger)\pmod{p-1},
\]
where each $\dagger$ is either $0$ or $p-(a+b)$. Hence
\begin{equation}\label{d(li)}
d(li)=(p-1,*,\dots,*).
\end{equation}
Align the components of \eqref{3.10} with those of \eqref{d(li)}. Without loss of generality, we may write
\[
d(l(s+(q-1)/2))=(p-1-u,*,\dots,*).
\]
Since $a/2\le u\le (p-1)/2$, we have $u=(p-1)/2$. Therefore,
\[
\binom{(l(s+(q-1)/2))^*}{(li)^*}\equiv_p\binom{p-1-u}{p-1}\cdots=0.
\]

\medskip

$3^\circ$ Equation~\eqref{3.4} follows from \eqref{3.7} and \eqref{3.8}. (Note that $(-1)^i=(-1)^{a+b}$ by \eqref{3.6}.)
\end{proof}

\begin{proof}[Proof of Conjecture~A]
Assume that $A_k$ is a PP of $\f_q$. We show that $k$ is a power of $p$, equivalently, $k'$ is a power of $p$. By Fact~\ref{F2.3}, we may assume that $e\ge 3$. By Fact~\ref{F2.2}, $k'=\sum_{i=0}^{e-1}l_ip^i$, where $l_i\in\{0,1\}$. Assume to the contrary that $\sum_{i=0}^{e-1}l_i>1$. Since $\text{gcd}(k',q-1)=1$, we have $(l_0,\dots,l_{e-1})\ne(1,\dots,1)$. We follow the notation of Lemma~\ref{L3.2}. In Lemma~\ref{L3.2}, let $u=v=(p-1)/2$ and $l=k'$. Moreover, choose $0< t\le e-1$ such that $y>0$; this is possible since $l_i=1$ for at least two $i$. Also note that $s=(q-1)/2-(u+vp^{t})<(q-1)/2$. Now we have 
\begin{equation}\label{3.19}
\begin{split}
0\,&\equiv_p \sum_{2\le i\le q-2}(-1)^i\binom{(k'(s+(q-1)/2))^*}{(k'i)^*}\binom i{2s}\kern 3.65cm\text{(by \eqref{3.2})}\cr
&\equiv_p \sum_{\substack{0\le a\le 2u\cr 0\le b\le 2v}}(-1)^{(a+b+u+v)(x+y)}{\binom au}^x{\binom bv}^x{\binom{a+b}{u+v}}^y\binom{2u}a\binom{2v}b \kern 1.02cm\text{(by \eqref{3.4})} \cr
&= \sum_{(p-1)/2\le a,\,b\le p-1}(-1)^{a+b}{\binom a{(p-1)/2}}^x{\binom b{(p-1)/2}}^x{\binom {a+b}{p-1}}^y\binom{p-1}a\binom{p-1}b.
\end{split}
\end{equation}
(For that last step of \eqref{3.19}, note that $x+y$ is odd since $\text{gcd}(k',q-1)=1$.) For $(p-1)/2\le a,b\le p-1$, $\binom{a+b}{p-1}\not\equiv_p 0$ only if $a=b=(p-1)/2$. Hence 
\[
\text{the right side of \eqref{3.19}}\equiv_p\binom{p-1}{(p-1)/2}^2=1\not\equiv_p 0,
\]
which is a contradiction.
\end{proof}

\noindent{\bf Remark.} The above proof of Conjecture~A uses Lemma~\ref{L3.2} only for $u=v=(p-1)/2$. We choose to present Lemma~\ref{3.2} in a more general setting in anticipation of possible applications of the result in related problems.

%%%%%%%%%%%%%%%%%%%%%%%%%%%%%%%%%%%%%%%%%%%

%%%%%%%%%%%%%%%%%%%%%%%%%%%%%%%%%%%%%%%%%%%

\end{document}